\newtheorem{theorem}{Theorem}%[section]
\newtheorem{proposition}[theorem]{Proposition}
\newtheorem{lemma}[theorem]{Lemma}
\newtheorem{remark}[theorem]{Remark}
\newtheorem{corollary}[theorem]{Corollary}
\theoremstyle{definition}
\newtheorem{example}[theorem]{Example}
\newcommand{\IR}{\mathbb R}
\newcommand{\U}{\mathcal U}
\newcommand{\w}{\omega}
\newcommand{\M}{\mathcal{M}}
\newcommand{\F}{\mathcal{F}}
\newcommand{\cc}{\mathfrak c}
\newcommand{\uhr}{\upharpoonright}
\newcommand{\cof}{\mathrm{cof}}
\title[Non-meager free sets for meager relations]{Non-meager free sets for meager relations on Polish spaces}
\author{Taras Banakh,   Lyubomyr Zdomskyy}
\address{Department of Mathematics, Ivan Franko National University of Lviv, Ukraine; and\newline
Instytut Matematyki, Jan Kochanowski University in Kielce, Poland.}
\email{t.o.banakh@gmail.com}
\urladdr{http://www.franko.lviv.ua/faculty/mechmat/Departments/Topology/bancv.html}
\address{Kurt G\"odel Research Center for Mathematical Logic,
University of Vienna, W\"ahringer Stra\ss e 25, A-1090 Wien,
Austria.}
\email{lzdomsky@gmail.com}
\urladdr{http://www.logic.univie.ac.at/\~{}lzdomsky/}
\keywords{meager relation, free set}
\subjclass[2010]{ Primary: 54E52,  54E50. Secondary:  	54D80.}
\thanks{The first author has been partially financed by NCN grant  DEC-2011/01/B/ST1/01439.
The second author is a recipient of an APART-fellowship of the Austrian Academy of Sciences.}
\begin{document}
\begin{abstract} We prove that
for each meager relation $E\subset X\times X$ on a Polish space $X$ there is a
nowhere meager subspace $F\subset X$ which is $E$-free in the sense
that $(x,y)\notin E$ for any distinct points $x,y\in F$.
\end{abstract}

\maketitle

\section{Introduction}
This paper is devoted to the problem of finding non-meager free subsets
for meager relations on Polish spaces.
For a relation $E\subset X\times X$, a subset $F\subset X$ is called
 {\em $E$-free} if $(x,y)\notin E$ for any distinct points $x,y\in F$.
This is equivalent to saying that $F^2\cap E\subset\Delta_X$ where
$\Delta_X=\{(x,y)\in X^2:x=y\}$ is the diagonal of $X^2$.

The problem of finding ``large'' free sets for certain ``small''
relations was considered by many authors, see \cite{NPS}, \cite{SS}, \cite{Kubis},
 \cite{Geschke1}, \cite{Geschke2}. Observe that the classical
Mycielski-Kuratowski Theorem~\cite[18.1]{Ke} implies that for each meager
 relation $E\subset X^2$ on a perfect Polish space $X$ there is an
$E$-free perfect subset $F\subset X$. We recall that a subset of a Polish
space is {\em perfect} if it is closed and has no isolated points.
Nonetheless the following result seems to be new.

\begin{theorem}\label{main}
For each meager relation $E\subset X^2$ on a Polish space $X$ there is
an $E$-free nowhere meager subspace $B\subset X$.
Moreover, if the set of isolated points is not dense in $X$ then
$B$ may be chosen of any cardinality $\kappa\in [\mathrm{cof}(\mathcal M),\mathfrak c]$.
\end{theorem}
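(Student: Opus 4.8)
The plan is to reduce $E$ to a convenient form, reformulate nowhere-meagerness as a family of $\cof(\M)$ escaping requirements, and then build $B$ by a transfinite recursion whose $E$-freeness is enforced geometrically. First I would symmetrize, replacing $E$ by $E\cup E^{-1}$, and write the result as an increasing union $\bigcup_{n\in\w}E_n$ of closed nowhere dense subsets of $X^2$. Passing to the perfect kernel of $X$ (legitimate, since in the ``moreover'' case the isolated points are not dense, and in general the isolated points carry no nontrivial meager relation and can be appended afterwards with a countable extra diagonalization), I may assume $X$ is perfect, so that $\Delta_X$ is nowhere dense and adding it to $E$ is harmless. By the Kuratowski--Ulam theorem the set $D=\{x\in X:E_x\text{ is nonmeager}\}$ is meager; I fix an increasing cofinal family $\{M_\alpha:\alpha<\cof(\M)\}$ of meager $F_\sigma$ sets with $D\subseteq M_0$. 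With a countable base $\{U_k\}$ of $X$, a set $B$ is nowhere meager iff it meets $U_k\setminus M_\alpha$ for every $k$ and $\alpha$; I enumerate these $\cof(\M)$ requirements as $\{(U_{k_\xi},M_{\alpha_\xi}):\xi<\cof(\M)\}$.

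I would build $B=\{b_\xi:\xi<\kappa\}$ so that each $b_\xi\in U_{k_\xi}\setminus(M_{\alpha_\xi}\cup M_0)$ (giving nowhere-meagerness, and meager sections $E_{b_\xi}$) and is $E$-free against all earlier points. The naive choice of $b_\xi$ in $U_{k_\xi}$ avoiding $M_{\alpha_\xi}\cup\bigcup_{\eta<\xi}(E_{b_\eta}\cup E^{b_\eta})$ collapses once $|\xi|\ge\mathrm{cov}(\M)$, since that many meager sections may already cover $U_{k_\xi}$. This accumulation of section-constraints is the crux. That it cannot be sidestepped by a cleverer point-choice or by a Borel scheme is explained by a Baire-category remark: a nowhere-meager $E$-free set cannot have the Baire property (for $E$ the graph of a fixed-point-free homeomorphism a comeager set meets its own image and so is never $E$-free), so $B$ is genuinely non-definable and no tree of branches with shrinking diameters can by itself be nowhere meager.

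To beat the accumulation I would compile $E$-freeness into the geometry, generalizing the Mycielski--Kuratowski scheme. Simultaneously with the points I maintain a scheme of nonempty open boxes $\{V_s\}$ indexed by a growing tree, with diameters tending to $0$ along branches and with the property that for distinct nodes $s,t$ of level $n$ one has $(\overline{V_s}\times\overline{V_t})\cap E_n=\emptyset$; the latter is arranged by choosing the level-$n$ centres to form an $E_n$-free set, which exists in abundance because $E_n$ is closed nowhere dense. Assigning to each $b_\xi$ its own branch, any two chosen points split at some finite level $m$ and therefore avoid $E_m$, and by monotonicity of $(E_n)$ this forces $(b_\xi,b_\eta)\notin E$ for all pairs at once; thus $E$-freeness is guaranteed once and for all by the separation built into the boxes, not by avoiding the accumulated sections. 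The two demands nonetheless interact: when I extend the scheme to route a fresh branch into $U_{k_\xi}$ and out of $M_{\alpha_\xi}$, its new box must still respect the level-wise $E_n$-avoidance against the boxes already present. This is exactly where $\mathrm{cov}(\M)<\cof(\M)$ exerts pressure and is the hardest point; I expect to push through it by exploiting the Bartoszy\'nski identity $\cof(\M)=\max(\mathfrak d,\mathrm{non}(\M))$, interleaving a dominating/fusion bookkeeping with the escaping requirements so that at each node only boundedly many separation constraints are active.

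It remains to realize every cardinality $\kappa\in[\cof(\M),\cc]$. The value $\cof(\M)$ is forced by the number of escaping requirements and is attained by using one branch per requirement. For larger $\kappa\le\cc$ I would widen the tree: since the $E_n$-free centre sets at each level may be taken of size $\cc$, the scheme supports up to $\cc$ pairwise $E$-free branches, from which I select $\kappa$ — the first $\cof(\M)$ handling all pairs $(U_{k_\xi},M_{\alpha_\xi})$ and the rest padding $B$ to size $\kappa$, these extra branches costing nothing thanks to the geometric $E$-freeness. The isolated points, at most countably many, are adjoined at the end. As indicated, the genuinely hard step is the simultaneous maintenance of a scheme that is both $E$-separated and rich enough to escape a cofinal family of meager sets, precisely in the regime $\mathrm{cov}(\M)<\cof(\M)$.
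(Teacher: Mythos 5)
Your preliminary reductions (symmetrization, writing $E=\bigcup_n E_n$ with $E_n$ closed nowhere dense and increasing, passing to the perfect kernel, Kuratowski--Ulam, and recasting nowhere-meagerness as $\cof(\M)$ requirements ``meet $U_k\setminus M_\alpha$'') are all sound, and your diagnosis of where the naive recursion dies --- accumulation of $\mathrm{cov}(\M)$ many meager section-constraints --- is exactly the right difficulty. But the fix you propose is not merely incomplete at the step you flag as ``the hardest point''; it is refuted by the very Baire-property obstruction you yourself cite. Suppose your construction terminated with a tree of boxes $\{V_s\}$ as described: closures of distinct level-$n$ boxes disjoint (you put $\Delta_X$ into the $E_n$'s), level-$(n{+}1)$ boxes refining level-$n$ boxes, diameters vanishing along branches, and $(\overline{V_s}\times\overline{V_t})\cap E_n=\emptyset$ for distinct level-$n$ nodes. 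Put $W_n=\bigcup\{V_s: s\mbox{ of level }n\}$ and $G=\bigcap_{n\in\w}W_n$. Every point of $G$ determines a unique branch, and any two distinct points of $G$ lie in distinct boxes at all sufficiently large levels $n$; since the $E_n$ increase to $E$, the \emph{whole} set $G$ is $E$-free, not just your selected points $b_\xi$. Now $G$ is a $G_\delta$ set containing $B$, so if $B$ is nowhere meager then $G$ is a nowhere meager, $E$-free set with the Baire property --- impossible for the relations in question (the paper's Example, or $E\supset$ the graph of a fixed-point-free minimal homeomorphism). So no bookkeeping via $\cof(\M)=\max(\mathfrak d,\mathrm{non}(\M))$ can route uncountably many branches through such a scheme. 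One can also see the breakdown locally: take $E$ the graph of the odometer $f$ on $2^\w$. Once the level-$n$ boxes have dense union (forced by density of your chosen points), the separation condition gives $f(V_s)\cap V_t=\emptyset$ for $t\ne s$, hence $f(V_s)\subset\overline{V_s}$; minimality of $f$ then forces each $\overline{V_s}$ to be all of $2^\w$, so a level can carry at most one box and the tree has at most one branch. The moral is that imposing $E$-freeness on \emph{open} boxes makes it a property of a definable envelope, which is precisely what cannot be nowhere meager.

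This is where the paper takes a genuinely different route: the non-definability is pushed into the sets themselves rather than into a geometric scheme. Using the Bartoszy\'nski--Judah combinatorial characterization of meager subsets of $2^\w$, a closed almost disjoint family $\{A_\alpha\}_{\alpha<\cc}$, and ultrafilters $\U_\alpha$ (with Talagrand's characterization of meager semifilters supplying non-meagerness), the paper constructs $\cc$ many nowhere meager sets $(G_\alpha)_{\alpha<\cc}$ in $2^\w$ with $(G_\alpha\times G_\beta)\cap E=\emptyset$ for all $\alpha\ne\beta$, and transfers this to perfect Polish spaces. Then the $\alpha$-th escaping requirement $(U_\alpha,X_\alpha)$ is met by a single point of $U_\alpha\cap G_\alpha\setminus X_\alpha$: freeness between chosen points is inherited from the set-level disjointness $(G_\alpha\times G_\beta)\cap E=\emptyset$, so constraints never accumulate and your $\mathrm{cov}(\M)$ barrier never arises. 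To salvage your outline you would need exactly such a proposition --- a family of $\cc$ pairwise $E$-separated nowhere meager (necessarily non-Baire-property) sets --- and that is the part your box scheme cannot deliver.
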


Let us recall that a subspace $A$ of a topological space $X$
\begin{itemize}
\item is {\em meager in $X$}, if $A$ can be written as a countable
union $A=\bigcup_{n\in\w}A_n$ of nowhere dense subsets of $X$;
\item is {\em nowhere meager in $X$}, if for any non-empty open set $U\subset X$ the intersection $U\cap A$ is not meager in $X$.
\end{itemize}
It is clear that a subset $A\subset X$ of a Polish space $X$ is nowhere meager if and only if $A$ is dense in $X$ and contains no open meager subspace. By definition,
$\mathrm{cof}(\mathcal M)$ is the minimal cardinality of a
collection $\mathcal X$ of meager subsets of the Baire space $\w^\w$ such that for every meager
$A\subset\w^\w$ there exists $X\in\mathcal X$ containing $A$. It is known \cite{Bla10} that $\cof(\M)=\mathfrak c$ under Martin's Axiom, and $\cof(\M)<\cc$ in some models of ZFC, see \cite{BJS}.

Theorem~\ref{main} will be proved in Section~\ref{main:pf}.
One of its applications is the existence of a first-countable
uniform Eberlein compact space which is not supercompact
(see \cite[5.2]{BKT}),
which was our initial motivation for considering free non-meager sets for meager relations.
The following simple example shows that the nowhere meager set $F$ in Theorem~\ref{main}
cannot have the Baire property. We recall that a subset $A$ of a topological space $X$ has {\em the Baire property} in $X$ if for some open set $U\subset X$ the symmetric difference $A\triangle U=(A\setminus U)\cup(U\setminus A)$ is meager in $X$.

\begin{example} For the nowhere dense relation
 $$E=\bigcup_{n\in\w}\{(x,y)\in\IR^2:|x-y|=2^{-n}\}\subset\IR\times\IR$$
on the real line $\IR$, each $E$-free subset $F\subset \IR$ with
the Baire property is meager.
\end{example}

\begin{proof} Assuming that $F$ is not meager, and using
 the Baire property of $F$, find a non-empty open subset $U\subset \IR$ such that
 $U\triangle F$ is meager and hence lies in some meager
 $F_\sigma$-set $M\subset \IR$.
Then $G=U\setminus M\subset F$ is a dense $G_\delta$-set in $U$.
By the Steinhaus-Pettis Theorem~\cite[9.9]{Ke},
the difference $G-G=\{x-y:x,y\in G\}$ is a neighborhood of zero
in $\IR$ and hence $2^{-n}\in G-G$ for some $n\in\w$.
Then any points $x,y\in G\subset F$ with $|x-y|=2^{-n}$ witness that
the set $F\ni x,y$ is not $E$-free.
\end{proof}

\begin{remark} {\rm By a classical result of Solovay \cite{Solovay},
there are models of ZF in which all subsets of the real line have
the Baire property. In such models each $E$-free subset for the
relation $E=\bigcup_{n\in\w}\{(x,y)\in\IR^2:|x-y|=2^{-n}\}$ is meager.
This means that the proof of Theorem~\ref{main}
must essentially use the Axiom of Choice.}
\end{remark}

\section{Some auxiliary results}\label{aux}

We recall \cite{BanZdo} that a family
$\mathcal F$ of infinite subsets of a countable set $X$ is called a \emph{semifilter},
if $A\in\mathcal F$ provided $F\subset^\ast A\subset X$ for some set $F\in\mathcal F$. Here $F\subset^* A$ means that $F\setminus A$ is finite.
Each semifilter on $X$ is contained in the semifilter $[X]^\w$ of all infinite subsets of $X$.
The semifilter $[X]^\w$ is a subset of the power set $\mathcal P(X)$ which can be identified with the Tychonoff product $2^X$ via characteristic functions. So, we can speak about topological properties of semifilters as subspaces of the compact Hausdorff space $\mathcal P(X)$.
According to Talagrand's characterization of meager semifilters on $\w$, a semifilter $\F$ on a countable set $X$ is meager (as a subset of $\mathcal P(X)$) if and only if $\F$ can be enlarged to a  $\sigma$-compact semifilter $\tilde\F\subset [X]^\w$. This characterization implies the following:

\begin{corollary}\label{tala} For any finite-to-one map $\phi:X\to Y$ between countable sets, a semifilter $\F\subset\mathcal P(X)$ is meager if and only if the semifilter $\phi[\F]=\{E\subset Y:\phi^{-1}(E)\in\F\}\subset\mathcal P(Y)$ is meager.
\end{corollary}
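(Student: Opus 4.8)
The plan is to reduce everything to Talagrand's criterion quoted above: a semifilter on a countable set is meager if and only if it admits a $\sigma$-compact semifilter enlargement inside the corresponding $[\,\cdot\,]^\w$. First I would record the (routine) observation that $\phi[\F]$ really is a semifilter on $Y$ contained in $[Y]^\w$; here the finite-to-one hypothesis already enters, since for $F\subset^* A$ in $Y$ the set $\phi^{-1}(F\setminus A)=\phi^{-1}(F)\setminus\phi^{-1}(A)$ is finite, so $\phi^{-1}(F)\subset^*\phi^{-1}(A)$ and upward-closedness of $\F$ transfers; likewise $E\in\phi[\F]$ forces $\phi^{-1}(E)$ infinite, whence $E$ is infinite.

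The technical engine will be two continuous maps between the compact spaces $\mathcal P(X)\cong 2^X$ and $\mathcal P(Y)\cong 2^Y$. Define $\Phi:\mathcal P(Y)\to\mathcal P(X)$ by $\Phi(E)=\phi^{-1}(E)$, and $\psi:\mathcal P(X)\to\mathcal P(Y)$ by $\psi(A)=\phi(A)$. The map $\Phi$ is continuous because the $x$-th coordinate of $\phi^{-1}(E)$ equals the $\phi(x)$-th coordinate of $E$, i.e. $\Phi$ is merely a dual coordinate map. The map $\psi$ is continuous because the $y$-th coordinate of $\phi(A)$ is $\bigvee_{x\in\phi^{-1}(y)}[x\in A]$, a \emph{finite} disjunction --- this is exactly where finite-to-oneness is indispensable, as an infinite disjunction would only be lower semicontinuous. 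The key soft fact I would then invoke is that, since $\mathcal P(X)$ and $\mathcal P(Y)$ are compact, the preimage under a continuous map of a $\sigma$-compact set is again $\sigma$-compact: writing the $\sigma$-compact set as $\bigcup_{n}K_n$ with each $K_n$ compact (hence closed), its preimage is a countable union of closed subsets of a compact space, i.e. again a countable union of compacta.

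With this in hand both implications become symmetric. If $\F$ is meager, take a $\sigma$-compact semifilter $\mathcal G\subset[X]^\w$ with $\F\subset\mathcal G$; then $\phi[\mathcal G]=\Phi^{-1}(\mathcal G)$ is $\sigma$-compact, is a semifilter in $[Y]^\w$ by the same finite-to-one checks, and contains $\phi[\F]$, so $\phi[\F]$ is meager. Conversely, if $\phi[\F]$ is meager, take a $\sigma$-compact semifilter $\mathcal G\subset[Y]^\w$ with $\phi[\F]\subset\mathcal G$ and set $\tilde\F=\psi^{-1}(\mathcal G)=\{A\subset X:\phi(A)\in\mathcal G\}$. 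This is $\sigma$-compact by the soft fact and is a semifilter in $[X]^\w$; moreover $\F\subset\tilde\F$, because for $A\in\F$ one has $A\subset\phi^{-1}(\phi(A))$, so $\phi^{-1}(\phi(A))\in\F$ by upward-closedness, that is $\phi(A)\in\phi[\F]\subset\mathcal G$. Hence $\F$ is meager.

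I expect the only genuine subtlety --- the main obstacle --- to be the bookkeeping around the $\subset^*$-relation and the infiniteness of members: at each application one must verify that the pushed or pulled family stays upward-closed under $\subset^*$ and avoids finite sets, and that precisely these checks (together with continuity of $\psi$) force $\phi$ to be finite-to-one rather than merely a map. Everything else --- continuity of $\Phi$ and $\psi$, and the compactness argument --- is formal.
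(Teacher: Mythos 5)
Your proof is correct and takes essentially the same approach as the paper: the paper offers no explicit argument beyond saying Talagrand's characterization "implies" the corollary, and your write-up is precisely that intended derivation, transferring $\sigma$-compact semifilter enlargements back and forth along the two continuous maps $E\mapsto\phi^{-1}(E)$ and $A\mapsto\phi(A)$, with the finite-to-one hypothesis entering exactly where it must (continuity of the image map, preservation of $\subset^*$ and of infiniteness of members).
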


We recall that a map $f:X\to Y$ between two sets is called {\em finite-to-one} if for each $y\in Y$ the preimage $\psi^{-1}(y)$ is finite and non-empty. In particular, each monotone surjection $\psi:\w\to\w$ is finite-to-one.

A key ingredient of the proof of Theorem~\ref{main} in the following proposition.

\begin{proposition} \label{main_prop}
For any meager relation $E\subset 2^\w\times 2^\w$ on the Cantor cube $2^\w$ there is a family $(G_\alpha)_{\alpha<\mathfrak c}$ of nowhere meager subsets in $2^\w$ such that $(G_\alpha\times G_\beta)\cap E=\emptyset$ for any distinct ordinals $\alpha,\beta<\mathfrak c$.
\end{proposition}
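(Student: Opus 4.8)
The plan is to first replace $E$ by a larger, structurally transparent meager relation. Using the standard normal form for meager subsets of $2^\w\times 2^\w$ (of the same flavor as Talagrand's characterization recalled above), I would fix a partition of $\w$ into consecutive finite intervals $(I_k)_{k\in\w}$ and two points $u,v\in 2^\w$ so that $E$ is contained in the meager relation consisting of those $(x,y)$ for which $x\uhr I_k=u\uhr I_k$ and $y\uhr I_k=v\uhr I_k$ hold simultaneously for only finitely many $k$. Enlarging $E$ in this way only strengthens the conclusion, and it converts cross-$E$-freeness into a single combinatorial demand: writing $A(x)=\{k:x\uhr I_k=u\uhr I_k\}$ and $B(x)=\{k:x\uhr I_k=v\uhr I_k\}$, it suffices to guarantee that $A(a)\cap B(b)$ is infinite whenever $a\in G_\alpha$, $b\in G_\beta$ with $\alpha\neq\beta$, applied to both orders of the pair. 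Corollary~\ref{tala} is what lets me treat $k$ as ranging over the countable index set of blocks and rescale the block partition freely, since the passage between coordinates and blocks is finite-to-one and preserves meagerness of the associated semifilters.

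The combinatorial heart is then to produce $\cc$ many nowhere meager sets whose agreement sets cross-intersect infinitely. Here I would bring in an independent family $\{R_\alpha:\alpha<\cc\}$ of subsets of the block-index set, so that every finite Boolean combination, and in particular each difference $R_\alpha\setminus R_\beta$, is infinite. The intended role of $R_\alpha\setminus R_\beta$ is to serve as an infinite reservoir of blocks dedicated to the ordered pair $(\alpha,\beta)$, on which members of $G_\alpha$ are steered toward $u$ and members of $G_\beta$ toward $v$, thereby feeding $A(a)\cap B(b)$. Because there are $\cc$-many squared ordered pairs but only countably many blocks, no block can be reserved for a single pair; the independence of the family is exactly what allows a single block to witness a whole independent collection of pairs at once.

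I would then build all the $G_\alpha$ simultaneously rather than by a transfinite recursion. Nowhere-meagerness would be secured against a fixed cofinal family $(M_\xi)_{\xi<\cof(\M)}$ of meager subsets of $2^\w$: inside every basic clopen set and against every $M_\xi$ I must place points of each $G_\alpha$ lying outside $M_\xi$, while the steering dictated by the $R_\alpha$ is permitted to constrain any given point on only finitely many blocks at a time, leaving cofinally many free blocks to realize the genericity needed to escape the $M_\xi$. The steering toward $u$ and $v$ must nevertheless succeed, for each pair, on infinitely many blocks of $R_\alpha\setminus R_\beta$, even though for each individual point it is active only sparsely; keeping these two pulls compatible is what forces the construction to be genuinely global.

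The crux, and the step I expect to be hardest, is precisely this reconciliation of nowhere-meagerness with uniform cross-freeness. A nowhere meager set is unavoidably wild on every infinite set of coordinates, in the sense that its projection onto any infinite coordinate set is again nowhere meager; so one can never reserve an infinite block-set on which an entire family is pinned to $u$ or to $v$, as any such pinning would make $G_\alpha$ nowhere dense. Hence the infinite agreements cannot arise from a common control set and must instead be delivered, pair by pair, through the Boolean combinations of the independent family, all of them at once. This is also why the naive recursion avoiding the meager sections of previously chosen points breaks down: at late stages the union of $\cc$-many such sections can already cover $2^\w$ when $\mathrm{cov}(\M)<\cc$. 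The entire difficulty is to package the steering into the independent family rigidly enough that all the infinite-agreement demands are met globally while each $G_\alpha$ remains nowhere meager.
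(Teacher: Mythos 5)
Your opening reduction is exactly the paper's first move: the paper also invokes the normal form for meager subsets of $2^\w\times 2^\w$ (citing Bartoszy\'nski--Judah, Theorem~2.2.4) to enlarge $E$ to the relation of those pairs $(x,y)$ that, on all but finitely many blocks, fail to agree simultaneously with $f_0$ and $f_1$; and your reformulation (``the cross agreement sets $A(a)\cap B(b)$ must be infinite, in both orders'') is the right target. The gap is in the combinatorial engine you propose for hitting that target, and it is fatal as stated. Your reservation sets $R_\alpha\setminus R_\beta$ are \emph{fixed} infinite sets of blocks, and there are only two ways to use a fixed set: (i) demand that each $a\in G_\alpha$ agree with $u$ on infinitely many blocks of $R_\alpha\setminus R_\beta$ and each $b\in G_\beta$ agree with $v$ on infinitely many blocks of $R_\alpha\setminus R_\beta$ --- but two infinite subsets of a fixed infinite set can be disjoint, so this does not make $A(a)\cap B(b)$ infinite; or (ii) demand agreement on all but finitely many blocks of $R_\alpha\setminus R_\beta$ --- but then, as you yourself observe, $G_\alpha$ is pinned on an infinite coordinate set and becomes meager. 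Independence of the family $\{R_\alpha\}$ does nothing to resolve this dichotomy: what is needed is a cross-intersection property of the \emph{per-point, varying} agreement sets of members of $G_\alpha$ against those of members of $G_\beta$, not a property of two fixed sets. Your proposal ends by calling this reconciliation ``the entire difficulty,'' which is accurate: the crux of the proposition is left unproved. There is also an internal inconsistency: you say the steering ``is permitted to constrain any given point on only finitely many blocks at a time,'' yet every single point of every $G_\alpha$ must have infinite $u$-agreement and $v$-agreement sets, or else the enlarged relation contains all pairs through that point.

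What the paper does at exactly this juncture --- and what your outline is missing --- is to let the control sets vary from point to point over \emph{ultrafilters}, and to get non-meagerness from non-meagerness of the associated filters rather than from free coordinates. Concretely: fix a closed almost disjoint family $\{A_\alpha\}_{\alpha<\cc}\subset[\w]^\w$, a monotone surjection $\varphi$ with $\varphi(A_\alpha)=\w$, a single free ultrafilter $\U$, and ultrafilters $\U_\alpha$ extending $\{A_\alpha\cap\varphi^{-1}[U]:U\in\U\}$. A point $g$ is put into $G_\alpha$ iff there exist witnesses $X_0\subset X_1$ in $\U_\alpha\setminus\bigcup_{\xi\ne\alpha}\U_\xi$ with $g$ agreeing with $f_0$ on the blocks in $X_0$ and with $f_1$ on the blocks outside $X_1$. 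Freeness is then automatic from ultrafilter membership: since $X_1^\beta\notin\U_\alpha$, the set $X_0^\alpha\setminus X_1^\beta$ belongs to $\U_\alpha$ and is infinite, and on those blocks $g_\alpha$ agrees with $f_0$ while $g_\beta$ agrees with $f_1$. Nowhere-meagerness is where the ultrafilter structure pays off a second time: the witnesses range over the trace ultrafilter $\mathcal P(A_\alpha)\cap\U_\alpha$ and the filter $\mathcal P(\w\setminus A_\alpha)\cap\bigcap_{\xi\ne\alpha}\U_\xi$, and these are shown non-meager by pushing them forward along finite-to-one maps onto $\U$ (Corollary~\ref{tala}, i.e.\ Talagrand's characterization); non-meagerness of these families of control sets then translates into non-meagerness of $G_\alpha$ via a product decomposition. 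So the two jobs your independent family was meant to perform --- infinite cross-intersections and room for genericity --- are performed by ultrafilter membership and by non-meager filters, respectively; without a substitute for that machinery, your construction does not go through.
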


\begin{proof} Using the fact that the points of the Cantor cube $2^\w$ can be identified with the branches of the binary tree $2^{<\w}=\bigcup_{n\in\w}2^n$, we can find a closed subset $\{A_\alpha\}_{\alpha<\mathfrak c}$ of $\mathcal P(\w)=2^\w$ which consists of infinite subsets of $\w$ and is almost disjoint in the sense that $A_\alpha\cap A_\beta$ is finite for any distinct ordinals $\alpha,\beta<\mathfrak c$. The compactness of $\{A_\alpha\}_{\alpha<\cc}$ in $2^\w$ implies the existence of a monotone surjection $\varphi:\w\to\w$ such that $\varphi(A_\alpha)=\w$ for all $\alpha<\cc$.

Fix any free ultrafilter $\U$ on $\w$ and for every $\alpha<\mathfrak c$ choose an ultrafilter $\U_\alpha$ on $\w$ extending the family $\{A_\alpha\cap \varphi^{-1}[U]:U\in\U\}$. The almost disjoint property of the family $\{A_\alpha\}_{\alpha<\mathfrak c}$ guarantees that $\w\setminus A_\alpha\in \U_\xi$ for any distinct ordinals $\alpha,\xi<\cc$.

\begin{lemma}\label{automat}
For every $\alpha<\cc$, the filter $$\F_\alpha=\mathcal P(\w\setminus A_\alpha)\cap \bigcap_{\alpha\ne\xi<\cc}\U_\xi$$
is non-meager in $\mathcal P(\w\setminus A_\alpha)$.
\end{lemma}

\begin{proof} By Corollary~\ref{tala}, the filter $\F_\alpha$ is not meager in $\mathcal P(\w\setminus A_\alpha)$ as its image $\varphi[\F_\alpha]=\{E\subset \w:\varphi^{-1}[E]\in\F_\alpha\}$ coincides with the ultrafilter $\U$ and hence is not meager in $\mathcal P(\w)$.
\end{proof}

Let $E\subset 2^\w\times 2^\w$ be a meager relation on $2^\w$.
By  \cite[Theorem~2.2.4]{BarJud95}, there exist a
 monotone surjection $\phi:\w\to\w$ and functions $f_0,f_1:\w\to 2$
such that
$$E\subset \big\{(g,g')\in 2^\w\times 2^\w\: : \: \forall^\infty n\in\w\:
 \big(g\uhr\phi^{-1}(n)\neq f_0\uhr\phi^{-1}(n)\big)\: \vee\: \big(g'\uhr\phi^{-1}(n)\neq f_1\uhr\phi^{-1}(n) \big) \big\}.
$$

For every ordinal
$\alpha<\cc$ consider the subset
$$
\begin{aligned}
G_\alpha= \big\{g\in 2^\w\: :\: &\exists X_0,X_1\in \U_\alpha\setminus\bigcup_{\alpha\ne\xi<\cc}\U_\xi\\
&\big(X_0\subset X_1) \wedge \big(g\uhr \phi^{-1}[X_0]=f_0\uhr \phi^{-1}[X_0]\big) \wedge \big(g\uhr \phi^{-1}[\w\setminus X_1]=f_1\uhr \phi^{-1}[\w\setminus X_1]\big) \big\}
\end{aligned}
$$in the Cantor cube $2^\w$.

\begin{lemma} \label{l1}
For every ordinal
$\alpha<\cc$ the set $G_\alpha$ is nowhere meager in $2^\w$.
\end{lemma}

\begin{proof}
Since $G_\alpha$ is closed under finite modifications of its elements, it is enough to show that
 $G_\alpha$ is non-meager in $2^\w$. Observe that $G_\alpha$  contains the set
$$
\begin{aligned}
G'_\alpha= \big\{g\in 2^\w\colon &\exists Y_0\in \U_\alpha \cap\mathcal P(A_\alpha) \;\;
\exists Y_1\in \mathcal P(\w\setminus A_\alpha)\setminus \bigcup_{\alpha\ne\xi<\cc}\U_\xi \\
&\big(g\uhr \phi^{-1}[Y_0]=f_0\uhr \phi^{-1}[Y_0]) \wedge \big(g\uhr \phi^{-1}[\w\setminus(A_\alpha\cup Y_1)]=f_1\uhr  \phi^{-1}[\w\setminus(A_\alpha\cup Y_1)]\big) \big\}.
\end{aligned}
$$
Indeed, if $g\in G_\alpha'$ is witnessed by $Y_0,Y_1$, then $X_0=Y_0$ and $X_1=A_\alpha\cup Y_1$
are witnessing that $g\in G_\alpha$.
Now $G_\alpha'$ may be written as the product $R_\alpha\times H_\alpha$, where
$$ R_\alpha=\big\{g\in 2^{\phi^{-1}[A_\alpha]}\;\colon \exists Y_0\in \U_\alpha \cap\mathcal P(A_\alpha) \;\;
\big(g\uhr \phi^{-1}[Y_0]=f_0\uhr \phi^{-1}[Y_0]\big) \big\} $$
and
$$
\begin{aligned}
H_\alpha= \big\{g\in 2^{\phi^{-1}[\w\setminus A_\alpha]}\;\colon
&\exists Y_1\in \mathcal P(\w\setminus A_\alpha)\setminus \bigcup_{\alpha\ne\xi<\cc}\U_\xi\\
&\big(g\uhr \phi^{-1}[\w\setminus(A_\alpha\cup Y_1)]=f_1\uhr  \phi^{-1}[\w\setminus(A_\alpha\cup Y_1)]\big) \big\}.
 \end{aligned}
$$
Thus it suffices to show that both $R_\alpha$ and $H_\alpha$ are non-meager.
By the homogeneity of $2^\w$ there is no loss of generality to assume that
$f_0\uhr \phi^{-1}[A_\alpha]\equiv 1$ and $f_1\uhr  \phi^{-1}[\w\setminus A_\alpha]\equiv 1$.

With $f_1$  as above  we see that $H_\alpha$ is simply the set of characteristic functions
of elements of the semifilter
\begin{eqnarray*}
 \mathcal H_\alpha= \big\{Z\subset \phi^{-1}[\w\setminus A_\alpha]\: :\: \exists Y_1\in \mathcal P(\w\setminus A_\alpha)\setminus \bigcup_{\alpha\ne\xi<\cc}\U_\xi
\;\;\;\;
  \big(\phi^{-1}[\w\setminus (A_\alpha\cup Y_1)]\subset Z\big)\big\}
\end{eqnarray*}
on $\phi^{-1}[\w\setminus A_\alpha]$.
Therefore
$$ \phi[\mathcal H_\alpha]= \big\{T\subset \w\setminus A_\alpha\: :\: \exists Y_1\in \mathcal P(\w\setminus A_\alpha)\setminus \bigcup_{\alpha\ne\xi<\cc}\U_\xi
\;\;\;\;  \big(\w\setminus (A_\alpha\cup Y_1)\subset T\big)\big\}. $$
Observe that  $Y_1\in \mathcal P(\w\setminus A_\alpha)\setminus \bigcup_{\alpha\ne\xi<\cc}\U_\xi$
iff  $\w\setminus (A_\alpha\cup Y_1)\in \bigcap_{\alpha\ne\xi<\cc}\U_\xi$,
and hence $\phi[\mathcal H_\alpha]$ is equal to the filter $\mathcal P(\w\setminus A_\alpha)\cap \bigcap_{\alpha\ne\xi<\cc}\U_\xi$ which is non-meager in $\mathcal P(\w\setminus A_\alpha)$ by Lemma~\ref{automat}, and consequently the filter
 $\mathcal H_\alpha$ is non-meager in $\mathcal P(\phi^{-1}[\w\setminus A_\alpha])$ by Corollary~\ref{tala}. In other words, $H_\alpha$ is a non-meager subset of $2^{\phi^{-1}[\w\setminus A_\alpha]}$.

The proof of the fact that $R_\alpha$ is non-meager is analogous. However, we present it
for the  sake of completeness.
With $f_0$  as above  we see that $R_\alpha$ is simply the set of characteristic functions
of elements of the semifilter
$$ \mathcal R_\alpha= \{Z\subset \phi^{-1}[A_\alpha]\: :\: \exists Y_0\in \mathcal P(A_\alpha)\cap \U_\alpha
\  \big(\phi^{-1}[Y_0]\subset Z\big)\}$$
on $\phi^{-1}[A_\alpha]$.
It follows that
$$ \phi[\mathcal R_\alpha]= \{T\subset  A_\alpha\: :\: \exists Y_0\in \mathcal P(A_\alpha)\cap \U_\alpha
\  \big(Y_0\subset T \big)\}= \mathcal P(A_\alpha)\cap \U_\alpha$$is a non-meager ultrafilter on $A_\alpha$ and then $\mathcal R_\alpha$ is a non-meager semifilter on $\phi^{-1}[A_\alpha]$ according to Corollary~\ref{tala}. Consequently, $R_\alpha$ is a non-meager subset of $2^{\phi^{-1}[A_\alpha]}$.
\end{proof}

\begin{lemma} For any distinct ordinals $\alpha,\beta<\cc$ we get $(G_\alpha\times G_\beta)\cap E=\emptyset$.
\end{lemma}

\begin{proof} Assume conversely that $(G_\alpha\times G_\beta)\cap E$ contains some pair $(g_\alpha,g_\beta)$. Fix sets $X^{\alpha}_0,X^{\alpha}_1$ and $X^{\beta}_0,X^{\beta}_1$
witnessing that $g_{\alpha}\in G_{\alpha}$ and $g_{\beta}\in G_{\beta}$, respectively.
The intersection $X^{\alpha}_0\cap (\w\setminus X^{\beta}_1)$ is infinite:
 otherwise $X^{\alpha}_0\subset^* X^{\beta}_1$ and  $X^{\beta}_1\in\U_{\alpha}$, which contradicts the definition of $G_{\beta}$.
Thus the set $X^\alpha_0\setminus X^\beta_1$ is infinite and for every $n\in X^\alpha_0\setminus X^\beta_1$ we get $g_{\alpha}\uhr \phi^{-1}(n)=f_0 \uhr \phi^{-1}(n) $
and $g_{\beta}\uhr \phi^{-1}(n)=f_1 \uhr \phi^{-1}(n)$, which implies $(g_{\alpha},g_{\beta})\not\in E$.
\end{proof}
This completes the proof of Proposition~\ref{main_prop}.
\end{proof}

Using the well-known fact that each perfect Polish space $X$ contains a dense $G_\delta$-subset homeomorphic to the space of irrationals $\w^\w$, we can generalize Proposition~\ref{main_prop} as follows.

\begin{proposition}\label{p2}
For any meager relation $E\subset X\times X$ on a perfect Polish space $X$ there is a family $(G_\alpha)_{\alpha<\mathfrak c}$ of nowhere meager subsets in $X$ such that $(G_\alpha\times G_\beta)\cap E=\emptyset$ for any distinct ordinals $\alpha,\beta<\mathfrak c$.
\end{proposition}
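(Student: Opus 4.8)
The plan is to transport everything to the Cantor cube and invoke Proposition~\ref{main_prop}. Since $X$ is a perfect Polish space, the quoted fact provides a dense $G_\delta$-subset $Z\subseteq X$ homeomorphic to $\w^\w$. In turn, $\w^\w$ is homeomorphic to the dense $G_\delta$-subset $D=\{x\in 2^\w:x^{-1}(1)\text{ is infinite}\}$ of $2^\w$, via the block map $(n_k)_{k\in\w}\mapsto 0^{n_0}10^{n_1}1\cdots$. Composing these, I obtain a homeomorphism $h\colon Z\to D$ onto a dense $G_\delta$-subset $D$ of $2^\w$. Throughout I will use the standard localization principle: if $W$ is a dense $G_\delta$-subset of a Polish space $Y$, then a set $A\subseteq W$ is meager in $W$ if and only if it is meager in $Y$, and consequently $A$ is nowhere meager in $W$ if and only if it is nowhere meager in $Y$ (density of $W$ identifies the open sets, and the meagerness of $Y\setminus W$ identifies the two meager ideals on subsets of $W$). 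The same applies to product spaces.

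First I transfer the relation. The set $E\cap(Z\times Z)$ is meager in $X\times X$, being a subset of the meager set $E$, and hence meager in the dense $G_\delta$-subset $Z\times Z$ of $X\times X$. Pushing it forward by the homeomorphism $h\times h\colon Z\times Z\to D\times D$ yields a set which is meager in $D\times D$, and therefore meager in $2^\w\times 2^\w$, since $D\times D$ is a dense $G_\delta$-subset of $2^\w\times 2^\w$. Denote this meager relation by $E'\subseteq 2^\w\times 2^\w$ and apply Proposition~\ref{main_prop} to obtain nowhere meager sets $(G'_\alpha)_{\alpha<\cc}$ in $2^\w$ with $(G'_\alpha\times G'_\beta)\cap E'=\emptyset$ for all distinct $\alpha,\beta<\cc$.

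Finally I pull the sets back. For each $\alpha<\cc$ put $G''_\alpha=G'_\alpha\cap D$ and $G_\alpha=h^{-1}[G''_\alpha]\subseteq Z\subseteq X$. Since $D$ is comeager in $2^\w$, intersecting with $D$ preserves nowhere-meagerness, so each $G''_\alpha$ is nowhere meager in $2^\w$; by the homeomorphism $h$ together with the localization principle applied first to $D\subseteq 2^\w$ and then to $Z\subseteq X$, this propagates back to give that each $G_\alpha$ is nowhere meager in $X$. For the freeness property, suppose $(x,y)\in(G_\alpha\times G_\beta)\cap E$ for some distinct $\alpha,\beta<\cc$. Then $x,y\in Z$, so $(x,y)\in E\cap(Z\times Z)$ and hence $(h(x),h(y))\in E'$; but $h(x)\in G''_\alpha\subseteq G'_\alpha$ and $h(y)\in G''_\beta\subseteq G'_\beta$, contradicting $(G'_\alpha\times G'_\beta)\cap E'=\emptyset$. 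Thus $(G_\alpha\times G_\beta)\cap E=\emptyset$, as required.

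I expect the only delicate point to be the consistent bookkeeping of the meager ideal across the three ambient spaces $X$, $Z\cong D$, and $2^\w$, together with their squares, which is precisely where the localization principle must be applied with care; the remaining steps are routine transfers along the homeomorphism $h$ and along inclusions of dense $G_\delta$-subsets.
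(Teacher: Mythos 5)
Your proof is correct and follows exactly the route the paper intends: the paper derives Proposition~\ref{p2} from Proposition~\ref{main_prop} in one sentence, citing the dense $G_\delta$-copy of $\w^\w$ inside a perfect Polish space, and you have simply supplied the routine transfer details (the embedding of $\w^\w$ as a dense $G_\delta$-subset of $2^\w$, the two-way localization of the meager ideal, and the pullback of the free family). No gaps; your bookkeeping of meagerness across $X$, $Z\cong D$, and $2^\w$ and their squares is accurate.
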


\section{Proof of Theorem~\ref{main}}\label{main:pf}

Let $E\subset X\times X$ be a meager
relation on a Polish space $X$.
If the set $D$ of isolated points is dense in $X$,
then $B=D$ is a required nowhere meager $E$-free subset of $X$.
 So, we assume that the set $D$ is not dense in $X$.
Then the open subspace $Y=X\setminus \bar D$ of $X$ is not empty and has no isolated points.
Let $\kappa\in [\mathrm{cof}(\mathcal M),\mathfrak c]$ be any cardinal.
By Proposition~\ref{p2}, there is a family $(G_\alpha)_{\alpha<\kappa}$
 of nowhere meager subsets in $Y$ such that $(G_\alpha\times G_\beta)\cap E=\emptyset$ for any distinct ordinals $\alpha,\beta<\kappa$.

Let $\U$ be a countable base of the topology  of $Y$ and $\mathcal X$ be a cofinal with respect to
inclusion family of meager subsets in $Y$ of size $\kappa$.
It is clear that the set $\U\times\mathcal X$ has cardinality $\kappa$  and hence can be enumerated as
$\U\times\mathcal X=\{(U_\alpha,X_\alpha):\alpha<\kappa\}$. Since the set $D$ is at most countable and
$E$ is meager in $X\times X$, the set
$E_0=\{y\in Y:\exists x\in D\;\;(x,y)\in E\mbox{ or }(y,x)\in E\}$ is meager in $Y$.
For every ordinal $\alpha<\kappa$ the set $G_\alpha$ is nowhere meager in $Y$,
which allows us to find a point $y_\alpha\in U_\alpha\cap G_\alpha\setminus (X_\alpha\cup E_0)$.
Then $B=D\cup\{y_\alpha\}_{\alpha<\kappa}$ is a nowhere meager $E$-free set in $X$.

\end{document}